\newtheorem{lemma}{Lemma}
\newtheorem{theorem}[lemma]{Theorem}
\newtheorem{proposition}[lemma]{Proposition}
\newtheorem*{theorem*}{Theorem}
\newtheorem*{conj*}{Conjecture}
\def\deg{{\rm deg}}
\def\Aut{{\rm Aut}}
\def\Out{{\rm Out}}
\def\Inn{{\rm Inn}}
\def\Sol{{\rm Sol}}
\def\PSL{{\rm PSL}}
\def\PGL{{\rm PGL}}
\def\U{{\rm U}}
\def\Cay{{\rm Cay}}
\def\soc{{\rm soc}}
\def\cos{{\rm Cos}}
\def\BiCay{{\rm BiCay}}
\def\Core{{\rm Core}}
\def\pn{\par\noindent}
\begin{document}
\openup 1.8\jot
\title{Tetravalent vertex-transitive graphs of order $6p$}
\author{Majid Arezoomand, Mohsen Ghasemi* and Mohammad A. Iranmanesh}

\thanks{{\scriptsize
\hskip -0.4 true cm MSC(2010): Primary: 20B20, Secondary: 05C25.
\newline Keywords: Tetravalent graph, vertex transitive graph, non-Cayley graph.\\
$*$Corresponding author}}

\begin{abstract}
A graph is vertex-transitive if its automorphism group acts
transitively on vertices of the graph. A vertex-transitive graph is
a Cayley graph if its automorphism group contains a subgroup acting
regularly on its vertices. In this paper, the tetravalent
vertex-transitive non-Cayley graphs of order $6p$ are classified
for each prime $p$.
\end{abstract}

\maketitle
\section{Introduction}
In this paper we consider undirected finite connected graphs without
loops or multiple edges. For a graph $X$ we use $V(X)$, $E(X)$,
$A(X)$ and $\Aut(X)$ to denote its vertex set, edge set, arc set and
its full automorphism group, respectively. For $u, v\in V(X)$, $u
\thicksim v$ represents that $u$ is adjacent to $v$, and is denoted by
$\{u, v\}$ the edge incident to $u$ and $v$ in $X$, and $N_X(u)$ is
the neighborhood of $u$ in $X$, that is, the set of vertices
adjacent to $u$ in $X$. A graph $X$ is said to be {\em
$G$-vertex-transitive}, {\em $G$-edge-transitive} and {\em $G$-arc-transitive}
(or {\em $G$-symmetric}) if $G\leq\Aut(X)$ acts transitively on  $V(X)$,
$E(X)$ and $A(X)$, respectively. In the special case, if $G=\Aut(X)$ then $X$ is said to be {\em
vertex-transitive}, {\em edge-transitive} and {\em arc-transitive}
(or {\em symmetric}). An {\em s-arc} in a graph $X$ is
an ordered $(s+1)$-tuple $(v_0,v_1,\cdots ,v_s)$ of vertices of $X$
such that $v_{i-1}$ is adjacent to $v_i$ for $1\leq i\leq s$, and
$v_{i-1}\neq v_{i+1}$ for $1\leq i\leq s$; in other words, a
directed walk of length $s$ which never includes a backtracking. A
graph $X$ is said to be {\em $s$-arc-transitive} if $\Aut(X)$ is
transitive on the set of $s$-arcs in $X$. A subgroup of the
automorphism group of a graph $X$ is said to be {\em $s$-regular} if
it acts regularly on the set of $s$-arcs of $X$. Recall that a permutation group $G$ acting on
a set $\Omega$ is called semiregular if the stabilizer of $\alpha\in G$, $G_{\alpha}=1$ for all
$\alpha\in G$ and is called regular if it is semiregular and transitive.

Let $G$ be a finite group and $S$ be a subset of $G$ such that $1
\notin S$ and $S=S^{-1}$ where $S^{-1}=\{s^{-1}\mid s \in S\}$. The {\em Cayley
graph} $X=\Cay(G,S)$ on $G$ with respect to $S$ is defined as the
graph with vertex set $V(X)=G$ and edge set $E(X)=\{\{g, sg\}
\mid g \in G, s\in S\}$. The automorphism group $\Aut(X)$ of $X$
contains the right regular representation $R(G)$ of $G$, the
acting group of $G$ by right multiplication, as a subgroup, and
$R(G)$ is regular on $V(X)$. A Cayley graph, $\Cay (G,S)$ is said to be {\em normal} if $R(G)$ is normal in
$\Aut(\Cay (G,S))$. By \cite[Proposition 3.1]{Xu1}, $N_{\Aut(\Cay (G,S))}(R(G))=R(G)\Aut (G,S)$,
where $\Aut (G,S)=\{\alpha\in\Aut(G)\mid S^\alpha=S\}$. Note that $\Aut(X)$ is normal if and only
if $\Aut (G,S)=\Aut(X)_v$ for some $v\in V(X)$.

It is well known that a vertex-transitive graph is a Cayley graph if
and only if its automorphism group contains a subgroup acting
regularly on its vertex set (see, for example,~\cite[Lemma 4]{S}).
There are vertex-transitive graphs which are not Cayley graphs and
the smallest one is the well-known Petersen graph. Such a graph will
be called a {\em vertex-transitive non-Cayley graph}, or a
{$\mathcal{VNC}$-graph} for short.

Many publications have been put into service of investigation the $\mathcal{VNC}$-graphs from different
perspectives. For example, in~\cite{M.1}, Maru\v si\v c asked for a
determination of the set $\mathcal{NC}$ of non-Cayley numbers, that is,
those numbers $n$ for which there exists a $\mathcal{VNC}$-graph of order $n$, and
to settle this question, a lot of $\mathcal{VNC}$-graphs were constructed in
~\cite{HIP,IP,L,M.2,M.3,M.4,M.5,MC,MC1,Mill,Seress}.
In~\cite{Feng}, Feng considered the question to determine the
smallest valency for $\mathcal{VNC}$-graphs of a given order and it was solved
for the graphs of odd prime power order. From~\cite{CO, Ma} all
cubic or tetravalent $\mathcal{VNC}$-graphs of order $2p$ are known for each
prime $p$. In~\cite{Z1,Z2,Z3}, all cubic $\mathcal{VNC}$-graphs of order
$2pq$ were classified for any primes $p$ and $q$. Also tetravalent
$\mathcal{VNC}$-graphs of order $2p^2$ and $4p$ were classified for any prime $p$
(see~\cite{CGhQ,Z}). We know that all components of vertex-transitive graphs are isomorphic.
So without loss of generality we may assume that $X$ is connected.

A graph $X$ is said to be a {\em bi-Cayley graph} over a group $G\leq \Aut(X)$ if it admits $G$ as a
semiregular automorphism group with two orbits of equal size. Note that every bi-Cayley
graph admits the following concrete realization. Let $R, L$ and $S$ be subsets of
a group $G$ such that $R=R^{-1}$, $L=L^{-1}$ and $R\cup L$ does not contain
the identity element of $G$. Let $g_0=(g, 0)$ and $g_1=(g, 1)$ where $g\in G$. Define the graph $\BiCay(G, R, L, S)$
to have vertex set the union of the {\em right part} $G_0=\{g_0 \mid g\in G\}$ and the {\em left part}
$G_1=\{g_1\mid g\in G\}$ and edge set the union of the {\em right edges} $\{\{h_0, g_0\}\mid gh^{-1}\in R\}$,
the {\em left edges} $\{\{h_1, g_1\} \mid gh^{-1} \in L\}$ and the {\em spokes}
$\{\{h_0, g_1\}\mid gh^{-1}\in S\}$. Now we introduce the coset graph constructed from a finite
group $G$ relative to a subgroup $H$ of $G$ and a union $D$ of some double cosets of $H$ in
$G$ such that $D^{-1}=D$. The {\em coset graph} $\cos(G, H, D)$ of $G$ with respect to
$H$ and $D$ is defined to have vertex set $[G:H]$, the set of right cosets of $H$ in $G$,
and edge set $\{\{Hg, Hdg\}\mid g\in G, d\in D\}$. The graph $\cos(G, H, D)$ has
valency $|D|/|H|$ and is connected if and only if $D$ generates the group $G$. The action
of $G$ on $V(\cos(G, H, D))$ by right multiplication induces a vertex-transitive
automorphism group, which is arc-transitive if and only if $D$ is a single double coset.
Clearly, $\cos(G, H, D)\cong\cos(G, H^{\alpha}, D^{\alpha})$ for every $\alpha\in\Aut(G)$.
Conversely, let $X$ be a graph and let $A$ be a vertex-transitive subgroup of $\Aut(X)$.
By \cite{S1}, the graph $X$ is isomorphic to a coset graph $\cos(A, H, D)$, where $H=A_u$
is the stabilizer of $u \in V(X)$ in $A$ and $D$ consists of all elements of $A$ which
map $u$ to one of its neighbors. It is easy to show that $\Core_A(H)=1$ and that $D$ is a union
of some double cosets of $H$ in $A$ satisfying $D=D^{-1}$. Recall that a graph $\Gamma=(V , E)$
is called an {\em $(m,n)$-metacirculant}, where $m,n$ are positive integers,
if $\Gamma$ is of order $|V|= mn$ and has two automorphisms $\rho, \sigma$ such that
\begin{itemize}
\item [$(a)$] $\langle\rho\rangle$ is semiregular and has $m$ orbits on $V$,
\item [$(b)$] $\sigma$ cyclically permutes the $m$ orbits of $\langle\rho\rangle$ and normalizes $\langle\rho\rangle$, and
\item [$(c)$] $\sigma^m$ fixes at least one vertex of $\Gamma$.
\end{itemize}

A graph $\widetilde{X}$ is called a {\em covering} of a graph $X$ with
projection $\wp:\widetilde{X}\rightarrow X$ if there is a surjection
$\wp:V{(\widetilde{X})}\rightarrow V(X)$ such that
$\wp|_{N_{\widetilde{X}}({\tilde{v}})}:{N_{\widetilde{X}}({\tilde{v}})}\rightarrow
{N_{X}(v)}$ is a bijection for any vertex $v\in V(X)$ and
$\tilde{v}\in \wp^{-1}(v)$. A covering $\widetilde{X}$ of $X$ with a
projection $\wp$ is said to be {\em regular} (or {\em $K$-covering}) if
there is a semiregular subgroup $K$ of the automorphism group
$\Aut(\widetilde{X})$ such that the graph $X$ is isomorphic to the
quotient graph $\widetilde{X}/K$, say by $\Im$, and the quotient map
$\widetilde{X}\rightarrow \widetilde{X}/K$ is the composition $\wp \Im$
of $\wp$ and $\Im$ (for the purpose of this paper, all functions are
composed from left to right). If $K$ is cyclic or elementary abelian
then $\widetilde{X}$ is called a {\em cyclic} or an {\em elementary
abelian covering} of $X$, and if $\widetilde{X}$ is connected $K$
becomes the covering transformation group. The {\em fibre} of an
edge or a vertex is its preimage under $\wp$. An automorphism of
$\widetilde{X}$ is said to be {\em fibre-preserving} if it maps a
fibre to a fibre, while every covering transformation maps a fibre
on to itself. All of fibre-preserving automorphisms form a group
called the {\em fibre-preserving group}.

Let $\widetilde{X}$ be a $K$-covering of $X$ with a projection $\wp$.
If $\alpha\in\Aut(X)$ and $\widetilde{\alpha}\in\Aut(\widetilde{X})$ satisfy $\widetilde{\alpha}\wp=\wp\alpha$, we call
$\widetilde{\alpha}$ a {\em lift} of $\alpha$, and $\alpha$ the {\em
projection} of $\widetilde{\alpha}$. Concepts such as a lift of a
subgroup of $\Aut(X)$ and the projection of a subgroup of
$\Aut(\widetilde{X})$ are self-explanatory. The lifts and the
projections of such subgroups are of course subgroups in
$\Aut(\widetilde{X})$ and $\Aut(X)$, respectively.

Let $m_1, m_2>1$ be two odd integers such that $(m_1,m_2)=1$. Let $1\leq t\leq m_2$, $(t,m_2)=1$ be such
that $t^2\equiv -1\pmod{m_2}$. Let $H=\langle r\rangle\times\langle s\rangle\cong\mathbb{Z}_{m_1}\times\mathbb{Z}_{m_2}=\mathbb{Z}_{m_1m_2}$.
Set $R=\{r,r^{-1}\}$, $L=\{r^t,r^{-t}\}$ and $S=\{1,s\}$ and $X_{m_1,m_2,t}=\BiCay(H,R,L,S)$. Then we have the following result.
\begin{proposition}\cite[Theorem 4.3]{QZ}\label{m1m2t}\label{p3} Let $X$ be a connected tetravalent vertex-transitive bi-Cayley graph over a cyclic
group of odd order $n$. Then $X$ is a $\mathcal{VNC}$-graph if and only if $X\cong X_{m_1,m_2,t}$ for some integers $m_1,m_2$ and $t$.
\end{proposition}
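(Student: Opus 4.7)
The plan is to prove the biconditional by handling each direction. For the ``if'' direction, given $X\cong X_{m_1,m_2,t}$, tetravalence and connectedness follow by inspection: the valency at each vertex is $|R|+|S|=|L|+|S|=4$, and $r,s\in R\cup S$ generate $H$, while $H$ is cyclic of odd order $m_1m_2$ by coprimality of $m_1,m_2$. Vertex-transitivity would be established by exhibiting an explicit automorphism $\phi$ swapping the right part $H_0$ and the left part $H_1$. The hypothesis $t^2\equiv -1\pmod{m_2}$ is exactly the condition needed so that a map of the form $(g,0)\mapsto(g^{\alpha},1)$, $(g,1)\mapsto(g^{\beta},0)$, with $\alpha,\beta$ chosen using $t$ on the $\mathbb{Z}_{m_2}$-factor, carries right edges to left edges (interchanging the roles of $R=\{r,r^{\pm 1}\}$ and $L=\{r^{\pm t}\}$) and permutes spokes correctly; one would verify $\phi\in\Aut(X)$ by checking it on generators of the edge set.

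For the ``only if'' direction, let $X=\BiCay(H,R,L,S)$ be connected, tetravalent, vertex-transitive, non-Cayley, over cyclic $H$ of odd order $n$. Since $|H|$ is odd, $H$ has no involutions, so $R=R^{-1}$ with $1\notin R$ forces $|R|$ to be even, and similarly $|L|$ is even. The valency conditions $|R|+|S|=|L|+|S|=4$ then yield $|R|=|L|$ and $(|R|,|L|,|S|)\in\{(4,4,0),(2,2,2),(0,0,4)\}$. The $(4,4,0)$ case gives a disconnected graph, and is excluded. In the $(0,0,4)$ case one may normalize a spoke to $1$ and check that $H$ together with a part-swap generates a regular subgroup of $\Aut(X)$, so $X$ is a Cayley graph, contradicting the VNC hypothesis. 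In the main case $(2,2,2)$, write $R=\{r,r^{-1}\}$, $L=\{l,l^{-1}\}$ and, after normalizing by an element of $\Aut(H)$, assume $S=\{1,s\}$. Vertex-transitivity produces a part-swap automorphism $\phi$ normalizing the semiregular $H$-action; hence $\phi$ induces $\sigma\in\Aut(H)$, and the compatibility conditions $R^{\sigma}=L$, together with the action on spokes, yield algebraic relations among $r,l,s$. A Chinese Remainder decomposition of $H$ then identifies $s$ with a generator of a cyclic direct factor of order $m_2$ and $r$ with a generator of the complementary factor of coprime order $m_1$, and forces $l=r^t$ with $t^2\equiv -1\pmod{m_2}$.

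The main obstacle is establishing the non-Cayley property of $X_{m_1,m_2,t}$ in the sufficiency direction: this requires computing $\Aut(X_{m_1,m_2,t})$ (typically of the form $H\rtimes\langle\phi\rangle$, possibly extended by a few outer symmetries) precisely enough to rule out any subgroup of order $2m_1m_2$ acting regularly on $V(X)$. The oddness of $n=m_1m_2$ is decisive here, because a hypothetical regular subgroup of order $2n$ inside the fibre-preserving group would have to contain an involution swapping the two parts, and the precise quadratic relation $t^2\equiv -1\pmod{m_2}$ (as opposed to $t\equiv\pm 1$) is exactly what obstructs the existence of such an involution compatible with $R$, $L$ and $S$. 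Parallel to this, in the necessity direction, one must verify that the borderline $(0,0,4)$ construction really is always Cayley, which again hinges on exhibiting a regular extension of the $H$-action by a part-swap involution. Together, the delicate determination of $\Aut(X_{m_1,m_2,t})$ and the case-by-case exclusion of regular subgroups constitute the bulk of the technical work.
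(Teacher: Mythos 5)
This proposition is not proved in the paper at all: it is quoted verbatim from Qiao and Zhou \cite[Theorem 4.3]{QZ}, so there is no internal argument to compare yours against. Judged on its own terms, your proposal is a reasonable outline of the easy parts but leaves the two genuinely hard steps unproved. First, in the sufficiency direction you never actually establish that $X_{m_1,m_2,t}$ is non-Cayley; you correctly identify this as ``the main obstacle'' and describe what a proof would need (a determination of $\Aut(X_{m_1,m_2,t})$ sharp enough to exclude a regular subgroup of order $2m_1m_2$), but describing the obstacle is not overcoming it. Second, and more seriously, in the necessity direction the pivotal sentence ``vertex-transitivity produces a part-swap automorphism $\phi$ normalizing the semiregular $H$-action'' is unjustified: vertex-transitivity of $X$ only says $\Aut(X)$ is transitive, not that transitivity is witnessed inside $N_{\Aut(X)}(R(H))$. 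A bi-Cayley graph can be vertex-transitive while $R(H)$ is far from normal and no element of its normalizer swaps the two orbits; ruling this out, or handling the non-normal case separately via quotients, covers and known classifications, is precisely where the bulk of the Qiao--Zhou argument lives. Your subsequent ``Chinese Remainder decomposition'' step, which is supposed to force $l=r^{t}$ with $t^{2}\equiv-1\pmod{m_2}$, is likewise asserted rather than derived, and it silently assumes that the normalization of $S$ to $\{1,s\}$ and the splitting of $H$ are compatible with the relations imposed by $\phi$.

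The elementary parts are fine: the parity argument giving $(|R|,|L|,|S|)\in\{(4,4,0),(2,2,2),(0,0,4)\}$ is correct since $H$ has no involutions, the $(4,4,0)$ case is disconnected, and the $(0,0,4)$ case is a bipartite bi-Cayley (Haar) graph over an abelian group, hence a Cayley graph on the generalized dihedral group of $H$ via $g_0\mapsto(g^{-1})_1$, $g_1\mapsto(g^{-1})_0$ together with the right translations. But as it stands the proposal is a plan for a proof rather than a proof, and the two gaps above are exactly the places where the cited theorem requires real work.
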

The following theorem is the main result of this paper.

\begin{theorem}\label{a3}
Let $X$ be a connected tetravalent vertex-transitive graph of order $6p$, where $p$
is a prime. Then $X$ is a $\mathcal{VNC}$-graph if and only if $X\cong X_{3,p,t}$, where $1\leq t\leq p-1$ and $t^2\equiv -1$ (mod $p$)
or $X$ is one of the nine specified graphs.
\end{theorem}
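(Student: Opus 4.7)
Let $A=\Aut(X)$ and fix $v\in V(X)$; then $|A|=6p\cdot|A_v|$, and by known bounds on stabilizers in tetravalent vertex-transitive graphs, $|A_v|$ is a $\{2,3\}$-number of bounded order independent of $p$. My overall strategy is to split the analysis according to the size of $p$, and for large $p$ to pass to a quotient of order $6$ via a normal Sylow $p$-subgroup of $A$.

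For the small primes $p\in\{2,3,5,7\}$ I would proceed by direct enumeration: the connected tetravalent vertex-transitive graphs on $12$, $18$, $30$ and $42$ vertices form a finite list which can be examined one by one, either by a computer check or by running through the groups of order $6p\cdot|A_v|$ for the few possible stabilizer shapes and testing for a regular subgroup of order $6p$. This case analysis should yield the nine exceptional graphs listed in the statement; note in particular that Proposition~\ref{p3} does not apply for $p=3$ (since $(m_1,m_2)=(3,3)$ fails coprimality) nor for $p=2$ (since $3p$ is even), so any non-Cayley examples at these primes must be accounted for as exceptions, and similarly for those $p\equiv 3\pmod 4$ where the congruence $t^2\equiv -1\pmod p$ has no solution.

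For $p\geq 11$, let $P\in\mathrm{Syl}_p(A)$, so $|P|=p$ since $|A_v|$ is bounded. By the Sylow count, either $P\trianglelefteq A$, or $A$ has a nonabelian composition factor of order divisible by $p$; comparing $|A|$ with the orders of nonabelian simple groups via CFSG eliminates the latter possibility for all but finitely many $p$, and those finite exceptions feed back into the enumeration above. Once $P\trianglelefteq A$, a standard argument shows that $P$ is semiregular on $V(X)$, so its orbits yield a block system of $6$ blocks of size $p$, and the quotient $\overline X=X/P$ is a connected vertex-transitive (multi)graph on $6$ vertices with total valency at most $4$. There are only finitely many such $\overline X$, and $X$ is a connected regular $\mathbb{Z}_p$-cover of $\overline X$.

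The classification then reduces to a voltage-assignment computation on each candidate quotient. Using the standard lifting criterion for covers, I would determine which voltage classes on $\overline X$ yield vertex-transitive covers and, among those, which produce Cayley graphs. In every remaining non-Cayley case I expect $X$ to admit a semiregular automorphism of order $3p$ with exactly two orbits, realising $X$ as a bi-Cayley graph over $\mathbb{Z}_{3p}$; Proposition~\ref{p3} with $m_1=3$ and $m_2=p$ then forces $X\cong X_{3,p,t}$ with $t^2\equiv -1\pmod{p}$, while in every Cayley case I would exhibit an explicit regular subgroup of $A$ of order $6p$. The principal obstacle is the cover-by-cover bookkeeping on the six-vertex quotients: for each quotient one must enumerate voltage orbits modulo $\Aut(\overline X)$, decide which lifts give vertex-transitive covers, and resolve the Cayley/non-Cayley dichotomy on each resulting cover. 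A secondary difficulty is the CFSG-level verification ruling out nonabelian simple composition factors of order divisible by $p$ in $A$ for the intermediate primes, which requires a careful list of simple groups whose orders have the shape $2^a3^bp$.
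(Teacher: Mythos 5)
There is a genuine gap at the central reduction of your plan. First, the premise that $|A_v|$ is bounded independently of $p$ is false for tetravalent graphs: there is no Tutte-type bound in valency $4$, and $C_{3p}[2K_1]$ is a connected tetravalent vertex-transitive graph of order $6p$ with $\Aut(X)\cong S_2\wr D_{6p}=\mathbb{Z}_2^{3p}\rtimes D_{6p}$, so $|A_v|=2^{3p}$. More seriously, the dichotomy ``either $P\trianglelefteq A$ or $A$ has a nonabelian composition factor of order divisible by $p$'' is simply false: a solvable group of order $2^s3^tp$ can have a non-normal Sylow $p$-subgroup while all of its composition factors are abelian. The same graph witnesses this inside the problem: $\mathbb{Z}_2^{3p}\rtimes D_{6p}$ is solvable, yet its Sylow $p$-subgroup $P$ permutes the coordinates of the normal base group $\mathbb{Z}_2^{3p}$ nontrivially, so $[\,\mathbb{Z}_2^{3p},P\,]\neq 1$ and $P$ is not normal. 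Hence your CFSG step does not dispose of the non-normal case, and the reduction to a six-vertex quotient with a $\mathbb{Z}_p$-voltage computation never gets off the ground for a large family of the graphs that must be classified (this particular one is Cayley, but your argument has to process it and breaks before deciding that).

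The paper never attempts to normalize the Sylow $p$-subgroup. It works instead with a minimal normal subgroup $N$ (an elementary abelian $r$-group, $r\in\{2,3,p\}$, when $A$ is solvable) and with the solvable radical when $A$ is not, and analyses quotient graphs of orders $3p$, $2p$ and $6$ accordingly: normality of tetravalent circulants, the metacirculant structure of vertex-transitive graphs of order $2p$ and $3p$, and, crucially, the Qiao--Zhou classification of tetravalent vertex-transitive bi-Cayley graphs over cyclic groups (Proposition~\ref{m1m2t}). That is how $X_{3,p,t}$ actually appears: one exhibits a semiregular $\mathbb{Z}_{3p}$ with two orbits, not a semiregular $\mathbb{Z}_p$ with six, so the relevant cover structure is over a two-vertex quotient in the bi-Cayley sense rather than over a six-vertex quotient. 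Your treatment of the nonsolvable possibilities via $K_3$-groups is close in spirit to the paper's, but the paper applies it to the socle of an almost simple group obtained after killing the solvable radical, not to composition factors forced by a non-normal Sylow subgroup. To salvage your plan you would need separate arguments for solvable $A$ whose minimal normal subgroup is a $2$- or $3$-group and for nonsolvable $A$ with nontrivial solvable radical, and that is precisely where the bulk of the paper's proof lives.
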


\section{Preliminaries}
In this section, we introduce some notation and definitions as well
as some preliminary results which will be used later in the paper.

For a regular graph $X$, use $d(X)$ to represent the valency of $X$,
and for any subset $B$ of $V(X)$, the subgraph of $X$ induced by $B$
will be denoted by $X[B]$. Let $X$ be a connected vertex-transitive
graph, and let $G\leq\Aut(X)$ be vertex-transitive on $X$.
For a $G$-invariant partition $\Omega$ of $V(X)$, the {\em quotient graph}
$X_\Omega$ is defined as the graph with vertex set $\Omega$ such that, for
any two vertices $B, C\in\Omega$, $B$ is adjacent to $C$ if and only if
there exist $u\in B$ and $v\in C$ which are adjacent in $X$. Let $N$
be a normal subgroup of $G$. Then the set $\Omega$ of orbits of $N$ in
$V(X)$ is a $G$-invariant partition of $V(X)$. In this case, the symbol $X_\Omega$ will be replaced by $X_N$.

For two groups $M$ and $N$, by $N\rtimes M$ we denote the semidirect product of $N$
by $M$. For a group $G$, the largest solvable normal subgroup of $G$ is called the {\em solvable radical}
of $G$ and is denoted by $\Sol(G)$. The outer automorphism group of the group $G$, is the quotient,
$\Aut(G)/ \Inn(G)$, where $\Aut(G)$ is the automorphism group of $G$ and $\Inn(G)$
is the subgroup consisting of inner automorphisms. The outer automorphism group is usually denoted by $\Out(G)$.
Also a lexicographic product of two graphs $X$ and $Y$ which is denoted by $X[Y]$
is defined as the graph with vertex set $V(X)\times V(Y)$ such that for any  two
vertices $u=(x_1, y_1)$ and $v=(x_2, y_2)$ in $X[Y]$, $u$ is adjacent to $v$ in $X[Y]$, whenever either
$\{x_1, x_2\} \in E(X)$ or $x_1=x_2$ and $\{y_1, y_2\} \in E(Y)$. Note that the lexicographic product of
two Cayley graphs is a Cayley graph. For group and
graph-theoretic terms not defined here we refer the reader to \cite{B} and \cite{W}, respectively.
Now we state the following well known result.
\begin{proposition}\label{p2.2}
Let $X$ be a $G$-vertex-transitive graph. Then $X$ is symmetric if and only if each vertex stabilizer
$G_v$ acts transitively on the set of vertices adjacent to $v$, where $v\in V(X)$.
\end{proposition}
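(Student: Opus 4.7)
The plan is to prove the two directions of this equivalence by direct application of the transitivity definitions. Recall that, in the paper's terminology, being $G$-symmetric means $G$ is transitive on the arc set $A(X)$, where an arc is an ordered pair of adjacent vertices.

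For the forward implication, I would assume $X$ is $G$-arc-transitive and fix $v \in V(X)$. Given two arbitrary neighbors $u, w \in N_X(v)$, the pairs $(v,u)$ and $(v,w)$ are both arcs, so by arc-transitivity of $G$ there is some $g \in G$ mapping $(v,u)$ to $(v,w)$. The equation $v^g = v$ forces $g \in G_v$, and then $u^g = w$ exhibits the required transitivity of $G_v$ on $N_X(v)$.

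For the reverse implication, I would assume $G$ is vertex-transitive on $V(X)$ and that, at each vertex $v$, the stabilizer $G_v$ is transitive on $N_X(v)$. (Note that vertex-transitivity makes all stabilizers conjugate, so it actually suffices to assume this at a single vertex.) Take two arbitrary arcs $(u_1, v_1)$ and $(u_2, v_2)$. By vertex-transitivity pick $g \in G$ with $u_1^g = u_2$; since automorphisms preserve adjacency, $v_1^g \in N_X(u_2)$. Now invoke the stabilizer hypothesis to obtain $h \in G_{u_2}$ satisfying $(v_1^g)^h = v_2$. Then $gh \in G$ sends $(u_1, v_1)$ to $(u_2, v_2)$, so $G$ acts transitively on $A(X)$ and $X$ is $G$-symmetric.

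There is no genuine obstacle here; the result is a direct orbit-stabilizer calculation and figures as a basic lemma in most texts on algebraic graph theory. The only minor point worth keeping in mind is the paper's convention that arcs are ordered, so that transitivity on $A(X)$ does not reduce to ``edge-transitivity plus some flipping''---one genuinely needs the stabilizer action on $N_X(v)$ to move a chosen out-neighbor of $v$ to any other out-neighbor.
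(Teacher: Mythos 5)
Your proof is correct and is the standard argument; the paper itself states Proposition~\ref{p2.2} as a well-known fact and gives no proof, so there is nothing to compare against. Both directions are handled properly (including the composition $gh$ in the reverse direction), and your reading of ``symmetric'' as ``$G$-arc-transitive'' is the intended one, since the literal statement with $\Aut(X)$-arc-transitivity would fail in the forward direction for a proper vertex-transitive subgroup $G$.
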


A finite simple group $G$ is said to be a $K_3$-group if its order has
exactly three distinct prime divisors. By \cite[Pages 12-14 ]{G}, $G$ is isomorphic to one of the following
groups:
\[ A_5, A_6, \PSL_2(7), \PSL_2(8),
\PSL_2(17), \PSL_3(3), \U_3(3), \U_4(2).\tag{1}
\]
The {\em socle} of a group $G$ is the subgroup generated by the set of all minimal normal subgroups of
$G$, it is denoted by $\soc(G)$. Also a group $G$ is said to be {\em almost simple} if
$T \leq G \leq \Aut(T)$, where $T$ is a non-abelian simple group. It is well known
that $G$ is an almost simple group if and only if $\soc(G)=T$ for some non-abelian simple group $T$.

Now we prove the following lemma.
\begin{lemma}\label{a1}
Let $G$ be an almost simple group and $\soc(G)=A$, where $A$ is a non-abelian simple $K_3$-group. Then
\begin{itemize}
\item[$(i)$] If $A\cong A_5$ then $G\cong A_5$ or $S_5$.
\item[$(ii)$] If $A\cong A_6$ then $G\cong A_6$ or $S_6$ or $S_6\rtimes\mathbb{Z}_2$.
\item[$(iii)$] If $A\cong\PSL_2(p)$, where $p\in \{7, 17\}$, then $G\cong\PSL_2(p)$ or $\PGL_2(p)$.
\item[$(iv)$] If $A\cong\PSL_2(8)$  then $G\cong\PSL_2(8)$ or $G/\PSL_2(8)\cong\mathbb{Z}_3$.
\item[$(v)$] If $A\cong\PSL_3(3)$ then $G\cong\PSL_3(3)$ or $G/\PSL_3(3)\cong\mathbb{Z}_2$.
\item[$(vi)$] If $A\cong\U_3(3)$ then $G\cong\U_3(3)$ or $G/\U_3(3)\cong\mathbb{Z}_2$.
\item[$(vii)$] If $A\cong\U_4(2)$ then $G\cong\U_4(2)$ or $G/\U_4(2)\cong\mathbb{Z}_2$.
\end{itemize}
\end{lemma}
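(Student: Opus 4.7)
The plan is to exploit the standard fact that if $G$ is almost simple with socle $\soc(G)=A$, then conjugation of $A$ by $G$ gives an embedding $G\hookrightarrow\Aut(A)$ in which $A$ is identified with $\Inn(A)$. Consequently one has the tower $A\le G\le \Aut(A)$, and the quotient $G/A$ is isomorphic to a subgroup of the outer automorphism group $\Out(A)=\Aut(A)/\Inn(A)$. So classifying the almost simple groups with a given socle $A$ reduces to enumerating the subgroups of $\Out(A)$.

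Having set this up, I would go through the eight simple $K_3$-groups listed in $(1)$ one by one, reading off $\Out(A)$ from the ATLAS (or from standard references on finite simple groups) and listing its subgroups. Concretely: $\Out(A_5)$, $\Out(\PSL_2(7))$, $\Out(\PSL_2(17))$, $\Out(\PSL_3(3))$, $\Out(\U_3(3))$, and $\Out(\U_4(2))$ are all cyclic of order $2$, each with only the trivial subgroup and itself, which immediately yields the two possibilities for $G$ in statements $(i)$, $(iii)$, $(v)$, $(vi)$, $(vii)$ and the $\PSL_2(7)$ half of $(iii)$. Similarly $\Out(\PSL_2(8))\cong\mathbb{Z}_3$, of prime order, so only the trivial and full extension occur, giving $(iv)$.

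The one case requiring more care is $(ii)$, where $A\cong A_6$ and $\Out(A_6)\cong\mathbb{Z}_2\times\mathbb{Z}_2$; this is the well-known exceptional outer automorphism phenomenon. Here the lattice of subgroups of $\Out(A_6)$ has the trivial subgroup, three distinct subgroups of order $2$, and the full group, so there are $a$ priori five almost simple overgroups of $A_6$. I would present these by order: $A_6$ itself, the three index-$2$ extensions of $A_6$ in $\Aut(A_6)$ (which have order $720$ and are grouped together as ``$S_6$'' in the statement of the lemma), and finally $\Aut(A_6)$ of order $1440$ (i.e.\ $S_6\rtimes\mathbb{Z}_2$). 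This accounts for $(ii)$.

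The only real content beyond citation is the identification of $\Out(A)$ in each case; I would not prove these structural facts from scratch but rather refer to \cite{G} (or the ATLAS), since the statement of the lemma already rests on that same source for the list $(1)$. The main obstacle, then, is purely a bookkeeping one in case $(ii)$: being careful that although $\Out(A_6)\cong\mathbb{Z}_2\times\mathbb{Z}_2$ admits three distinct order-$2$ subgroups, they all produce groups of the same order $720$, which is why the lemma lumps them under a single label.
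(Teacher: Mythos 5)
Your proposal takes essentially the same route as the paper: both embed $G$ in $\Aut(A)$ via $A\le G\le\Aut(A)$, read off $|\Out(A)|$ from standard references, and enumerate the subgroups of $\Out(A)$ case by case, so the argument matches. The one caveat is in case $(ii)$: the three order-$2$ subgroups of $\Out(A_6)\cong\mathbb{Z}_2\times\mathbb{Z}_2$ yield three pairwise non-isomorphic groups of order $720$ (namely $S_6$, $\PGL_2(9)$ and $M_{10}$), so they cannot literally be ``grouped together as $S_6$'' as you suggest --- but the paper's own statement and proof of $(ii)$ carry the same imprecision, so this is a defect of the lemma rather than of your argument.
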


\begin{proof}
Since $G$ is an almost simple group and $\soc(G)=A$, it implies that $A\leq G\leq\Aut(A)$.
If $A\cong A_5$, then $A_5\leq G\leq \Aut(A_5)$. Now
since $|\Out(A_5)|=2$, it follows that $G \cong A_5$ or $S_5$ and $(i)$ holds. Also if
$A$ is isomorphic to one of $\PSL_2(7)$, $\PSL_2(17)$, $\PSL_3(3)$, $\U_3(3)$
or $\U_4(2)$ then $|\Out(A)|=2$ and the assertions in $(iii),(v),(vi)$ and
$(vii)$ hold. If $A\cong A_6$ then $|\Out(A_6)|=4$ and
$\Aut(A_6)\cong S_6\rtimes \mathbb{Z}_2$. Thus $G\cong A_6$ or $S_6$ or
$S_6\rtimes \mathbb{Z}_2$ and $(ii)$ holds. Finally if $A\cong\PSL_2(8)$
then $|\Out(\PSL_2(8))|=3$. Hence $G\cong\PSL_2(8)$ or $G/ \PSL_2(8)\cong\mathbb{Z}_3$ and $(iv)$ holds.
\end{proof}

\section{Main Results}
In this section we classify all connected tetravalent $\mathcal{VNC}$-graphs of order $6p$ where $p$ is a prime.
To do this we prove the following propositions.
\begin{proposition}\label{p2}
Let $X$ be a graph,  $A=\Aut(X)$ and $N$ be the  normal subgroup of $A$. Also let $\Omega$ be the
set of orbits of $N$ on $V(X)$ and $X_N$ be a Cayley graph on subgroup of $A/K$, where $K$ is the
kernel of action $N$ on $\Omega$. Then $X$ is a Cayley graph on some subgroup of $A$.
\end{proposition}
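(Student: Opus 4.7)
The plan is to exhibit a subgroup of $A$ that acts regularly on $V(X)$, from which $X$ being a Cayley graph follows by the standard characterization (for instance \cite[Lemma 4]{S}).

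First I would observe that because $\Omega$ consists of the orbits of $N$ itself, every element of $N$ fixes each block of $\Omega$ setwise; hence the kernel $K$ of $N$ acting on $\Omega$ coincides with $N$, and the hypothesis unpacks as follows: there is a subgroup $T/N \leq A/N$ acting regularly on $\Omega$. Let $T \leq A$ denote the full preimage of this subgroup, so that $|T| = |N|\cdot|T/N| = |N|\cdot|\Omega|$.

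Next I would check that $T$ is transitive on $V(X)$. Given $u,v \in V(X)$ with $N$-orbits $B_u,B_v$, the regularity of $T/N$ on $\Omega$ yields $tN \in T/N$ with $B_u^{t} = B_v$; hence $u^t \in B_v$, and the transitivity of $N$ on $B_v$ then supplies $n \in N$ with $(u^t)^n = v$, so $tn \in T$ sends $u$ to $v$. To promote transitivity to regularity, I would compare orders: in the setting in which this Proposition is applied, $N$ acts semiregularly on $V(X)$, so each $N$-orbit has size $|N|$ and $|V(X)| = |N|\cdot|\Omega| = |T|$. A transitive permutation group whose order equals the cardinality of the set on which it acts is automatically regular, so $T$ is the desired regular subgroup of $A$, and $X \cong \Cay(T,S)$ for a suitable $S$.

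The one genuinely delicate point is this implicit semiregularity of $N$: without it the equality $|T| = |V(X)|$ can fail, and one must instead build a regular subgroup from a section of $T/K$ together with a regular piece of the action of $K$ on a single block, which is considerably more subtle. In the concrete applications within the $6p$ classification the normal subgroups $N$ that arise (minimal normal subgroups intransitive on $V(X)$) are naturally semiregular, so the short order-counting argument above suffices.
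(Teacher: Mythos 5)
Your proposal is correct and follows the same basic route as the paper's own proof: pass to the preimage $T\leq A$ of the regular subgroup $H=T/K$ of $A/K$, verify that $T$ is transitive on $V(X)$ by composing an element of $T$ carrying the block of $u$ to the block of $v$ with an element of $N$ acting inside the target block, and then upgrade transitivity to regularity. The transitivity step is essentially word-for-word what the paper does. Where you differ is the regularity step, and your version is the more careful one. The paper argues that if $x^t=x$ for $t\in T$ then $(x^N)^{Kt}=x^N$, ``a contradiction''; but regularity of $T/K$ on $\Omega$ only forces $t\in K$, so this is a contradiction only when $K_x=1$, a hypothesis nowhere stated. Your order count $|T|=|N|\cdot|\Omega|=|V(X)|$ makes exactly the same hidden assumption visible: it needs the relevant kernel ($N$, or $K$) to act semiregularly on $V(X)$. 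You are right that the proposition is false as literally stated without this hypothesis --- for instance, take $X$ the Petersen graph and $N=A_5\trianglelefteq S_5=\Aut(X)$: then $N$ is transitive, $\Omega$ is a single block, $X_N=K_1$ is vacuously a Cayley graph on the trivial subgroup of $A/K$, yet $X$ is not Cayley --- and you are also right that in every place the paper invokes the proposition the kernel has already been shown to have trivial vertex stabilizers, so the intended argument goes through. In short: same approach, but you have correctly located and named the gap that the paper's own proof glosses over.
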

\begin{proof}
Suppose that $X_N=\Cay(H, S)$, where $H\leq A/K$. Thus we may suppose that $H=T/K$, where $T \leq A$.
Let $x, y$ be two arbitrary elements of $V(X)$. Set $X=x^N$ and $Y=y^N$. If $X=Y$ then there is $\alpha\in N$
so that $x^{\alpha}=y$. Thus we may suppose that $X\neq Y$. By our assumption there is an element
$K\beta\in T/K$ such that $(x^N)^{K\beta}=(y^N)$ and so $(x^N)^{\beta}=(y^N)$. Now $x^{\beta}=y^n$ for
some $n\in N$. Now $x^{({\beta}n^{-1})}=y$, where ${\beta}n^{-1} \in T$. Thus $T$ acts transitively
on $V(X)$. Suppose that $x^t=x$, where $t\in T$. Now $(x^N)^{Kt}=(x^N)$, a contradiction. Thus $T$
acts regularly on $V(X)$ and $X$ is a Cayley graph on $T$.
\end{proof}

\begin{proposition}\label{p3}
Let $X$ be a connected tetravalent vertex-transitive graph of order $3p$, where $p$ is a prime.
Then $X$ is a $\mathcal{VNC}$-graph if and only if $X$ is isomorphic to $L(O_3)$, the line graph of the Petersen graph.
\end{proposition}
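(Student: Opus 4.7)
Since the Petersen graph $O_3$ has $15$ edges and is $3$-regular and edge-transitive, its line graph $L(O_3)$ has $15 = 3 \cdot 5$ vertices, is $4$-regular, and is vertex-transitive with $\Aut(L(O_3)) \cong S_5$; direct inspection shows $S_5$ has no subgroup of order $15$, so $L(O_3)$ is a tetravalent $\mathcal{VNC}$-graph of order $3p$ with $p = 5$.

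\textbf{Necessity.} Suppose $X$ is a connected tetravalent vertex-transitive $\mathcal{VNC}$-graph of order $3p$, and set $A := \Aut(X)$. The plan is to analyze $A$ via a minimal normal subgroup $N$, using Proposition \ref{p2} to rule out solvable transitive factors and Lemma \ref{a1} to enumerate the almost-simple possibilities. I first dispose of the small primes: for $p=2$ the connected tetravalent vertex-transitive graphs of order $6$ are $K_{3,3}$ and the octahedron $K_{2,2,2}$, both Cayley; for $p=3$ the order is $3^2$ and a classical fact on vertex-transitive graphs of prime-power order yields a regular Sylow $3$-subgroup of $A$, so $X$ is Cayley. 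Hence $p \geq 5$.

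Now let $N$ be a minimal normal subgroup of $A$. If $N$ is elementary abelian and transitive on $V(X)$, then $N$ is regular and $X$ is Cayley, contradiction. If $N$ is elementary abelian and intransitive, its orbits have common prime size $q \in \{3, p\}$, the quotient $X_N$ has order $p$ or $3$ and at its forced valency is readily seen to be a Cayley graph, so Proposition \ref{p2} lifts this structure to a regular subgroup of $A$, again contradicting the $\mathcal{VNC}$ assumption. Thus every minimal normal subgroup of $A$ is non-abelian; the same intransitive/transitive dichotomy applied to non-abelian $N$ (invoking Proposition \ref{p2} on $X_N$ whenever $N$ is intransitive, and ruling out a second non-abelian minimal normal subgroup since it would commute with the first and produce an abelian transitive quotient) forces $A$ to be almost simple with socle $T$ whose order is divisible by $3p$ and by no prime outside $\{2,3,p\}$. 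Therefore $T$ is a $K_3$-group from list $(1)$, and $A$ appears in Lemma \ref{a1}.

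The main obstacle is then the case analysis over these eight candidates. For each pair $(T, G)$ supplied by Lemma \ref{a1}, I look for a point stabilizer $H \le G$ of index $3p$ together with a self-paired orbital of length $4$ such that $H$ and a representative of that orbital generate $G$; the coset graph $\cos(G, H, HgH)$ then recovers $X$. Matching $|G|/|H| = 3p$ against the orders of the eight simple groups eliminates most candidates on divisibility grounds; in the remaining cases either $G$ contains a subgroup of order $3p$ acting regularly on $[G:H]$ (so $X$ is Cayley, contradiction), or no suitable self-paired length-$4$ suborbit exists, or the resulting coset graph is disconnected or has the wrong valency. The unique surviving configuration is $T = A_5$, $G = S_5$, $p = 5$, with $H$ the stabilizer of an unordered pair in the action of $S_5$ on $\binom{\{1,\dots,6\}}{2}$; the associated coset graph is isomorphic to $L(O_3)$, completing the proof.
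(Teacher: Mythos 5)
Your overall strategy (reduce via a minimal normal subgroup $N$ and the quotient $X_N$, then push the residual case into an almost simple group with socle a $K_3$-group) is the same as the paper's, but the reduction step contains a genuine gap. You dispose of the elementary abelian case, and of an intransitive non-abelian $N$, by saying that $X_N$ is a Cayley graph and that Proposition \ref{p2} ``lifts this structure to a regular subgroup of $A$.'' That lift is not automatic: the subgroup $T\leq A$ produced there has order $|T/K|\cdot|K|$, where $K$ is the kernel of the action on the $N$-orbits, and $T$ is regular on $V(X)$ only when $|K|$ equals the common orbit length of $N$. This can fail. For example, when $N$ is elementary abelian of order $p$ the quotient is $C_3$ and $K$ can induce a dihedral group on an orbit, giving $|K|=2p$ and $|T|=6p$; when $N$ is non-abelian and intransitive one has $|K|\geq |N|\geq 60$, so the lifted subgroup is nowhere near regular and no Cayley contradiction results. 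This is precisely why the paper does \emph{not} invoke Proposition \ref{p2} at these points: it bounds $|K|$, takes $\alpha\in A$ with $K\alpha$ of order $3$ (or $p$), runs through $o(\alpha^3)\in\{1,2,p,2p\}$ to extract an honest element of order $3$, and combines it with a Sylow $p$-subgroup to build a regular subgroup by hand; and in the non-abelian intransitive case it instead shows $A/N$ is solvable, that $N$ is the unique minimal normal subgroup, and that $N$ is simple because $p^2\nmid|A|$, which is how almost simplicity is actually reached (your alternative justification for uniqueness --- that a second minimal normal subgroup would ``produce an abelian transitive quotient'' --- is also not a proof; the correct reason is an order/solvability count).

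Two further points. First, your terminal case analysis over the eight groups of list $(1)$ is asserted rather than carried out; the paper shortens this by quoting the known classifications for orders $6,9,15,21$ and for symmetric graphs of order $3p$, so that only $\PSL_2(17)$ and $\PSL_3(3)$ survive and can be eliminated with the Atlas and \texttt{GAP}. If you insist on running all of $p\geq 5$ through the general machinery you must actually verify the index-$3p$ subgroup and suborbit data for all eight socles. Second, your description of the surviving configuration is garbled: $S_5$ does not act on $2$-subsets of a $6$-set, and the stabilizer of a $2$-subset of $\{1,\dots,5\}$ has index $10$, not $15$. The vertices of $L(O_3)$ correspond to the edges of the Petersen graph, i.e.\ to unordered pairs of disjoint $2$-subsets of $\{1,\dots,5\}$, and the point stabilizer is a Sylow $2$-subgroup $D_8$ of $S_5$ of index $15$.
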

\begin{proof}
Suppose that $X$ is a connected tetravalent $\mathcal{VNC}$-graph of order $3p$. By \cite{MR}, all tetravalent
vertex-transitive graphs of order $6 , 9$ and $21$ are Cayley graphs. Also a tetravalent vertex-transitive graph
of order $15$ is not a Cayley graph and is isomorphic to $L(O_3)$.
Thus we may assume that $p\geq 11$. If $X$ is symmetric then by \cite[Page 215]{WX}, is isomorphic to
one of $L_3(2)^4_{21}$ or $G(3p, 2)$, which are Cayley graphs.
Thus in the following we may assume that $X$ is not symmetric. Let $A=\Aut(X)$ and $v\in V(X)$.
Since $A_v$ is a $\{2, 3\}$-group we have $|A|=2^s3^tp$, for some non-negative integers $s, t$. We claim that $A$ is not solvable.

Suppose by contrary that $A$ is solvable and let $N$ be a minimal normal subgroup of $A$. Then $N$ ia an
elementary abelian $r$-group, where $r\in \{2, 3, p\}$, $N=T^k$, and $T$ is a simple group.
Let $\Omega$ be the set of orbits of $N$ on $V(X)$ and $K$ be the kernel of the action $A$ on the set
of orbits of $N$. By considering the order of graph, we see that either $r=3$ or $r=p$. Assume first that
$N$ is an elementary abelian $3$-group. Then $X_N$ has order $p$ and has valency $2$ or $4$.
It is easy to see that $K$ acts faithfully on each orbits and therefore $K \leq S_3$, which implies that $|N|=3$.
Let $P$ be a Sylow $p$-subgroup of $A$. Then $|P|=p$ and since $N$ is a normal subgroup of $A$, it follows that
the orbits of $N$ form a complete block system of $A$. Moreover $P\times N$ acts regularly on $V(X)$ which implies that $X$
is a Cayley graph, a contradiction. Next assume that $N$ is an elementary abelian $p$-group. Then $X_N$
has order $3$ and its valency is $2$. Hence $\Aut(X_N)\cong D_6$. Suppose that $\Omega=\{\Delta_0, \Delta_1, \Delta_2\}$
and $\Delta_i \thicksim \Delta_{i+1}$ where the subscripts are taken modulo 3. If $\Delta_i$
has an edge, then $X[\Delta_i]\cong C_p$ and it is easy to see that $|K_v|\leq 2$ where $v\in V(X)$.
Thus we have $|K|\leq 2p$. In case $\Delta_i$ has no edge, then since $|V(X_N)|=3$ and $p\geq 11$ we conclude
that $X[\Delta_i \cup \Delta_{i+1}] \cong C_{2p}$. Let $K^*$ be the kernel of the action of $K$ on $X[\Delta_i\cup\Delta_{i+1}]$.
Now the connectivity of $X$ and transitivity of $A/K$ on $V(X_N)$ imply that $K^{*}=1$ and
consequently, $K\leq\Aut(X[\Delta_i\cup\Delta_{i+1}])\cong D_{4p}$.
Since $K$ fixes $\Delta_i$ it follows that $|K|\leq 2p$. Since $A/K$ is transitive on $V(X_N)$ it follows
that 3 divides $|A/K|$ and $A/K$ has an element of order $3$, say $K\alpha$, where $\alpha \in A$. Therefore
$\alpha^3 \in K$ and hence $o(\alpha^3)=1$, 2, $p$ or $2p$. If $\alpha^3=1$ then
$\langle\alpha\rangle.P$ acts transitively and so regularly on $V(X)$, a contradiction.
Also if $o(\alpha^3)=2$, $p$ or $2p$ then $\alpha^2$, $\alpha^p$ and $\alpha^{2p}$ have order
$3$. In each case there is an element of order 3, say $\beta$, in $A$ such that
$\langle\beta\rangle.P$ acts regularly on $V(X)$, another contradiction. Hence $A$ is not solvable.

Let $L=\Sol(A)$, $\Sigma$ be the set of orbits of $L$ on $V(X)$ and $K$ be the
kernel of the action of $A$ on the set of orbits of $L$.
Assume first that $L$ is not trivial. Since $K_v$ is a $\{2, 3\}$-group, it implies that $K_v$ is solvable
and by considering $K=LK_v$ we see that $K$ is solvable. By the definition of solvable radical we conclude that
$K=L$ and $A/L \leq\Aut(X_L)$. If $\deg(X_L)=0$ then $A/L$ is solvable which implies that $A$ is solvable, a contradiction.
Since $|X|=3p$, this implies that $\deg(X_L)\neq 1, 3$. In case $\deg(X_L)=2$ we have $\Aut(X_L)\cong D_{2n}$, where $|V(X_L)|=n$ and $n\in\{3, p \}$.
Now $A/L$ is solvable and so $A$ is solvable, which is impossible. Thus we may assume that $\deg(X_L)=4$
and $L\leq S_3$. Thus $X_L$ is a circulant graph and $X_L=\Cay(G, S)$, where
$G\cong\mathbb{Z}_p=\langle a \rangle$, $S=\{a^r, a^{-r}, a^s, a^{-s}\}$ where $(r, p)=(s, p)=1$.
By \cite[Theorem 1.2]{BFSX}, $X_L$ is normal. Hence $\Aut(G, S)=\Aut(X_L)$.
Since $\sigma: a \mapsto a^r$ is an automorphism of $G$ we have $S=\{a, a^{-1}, a^i, a^{-i}\}$ where $(i, p)=1$.

Set $B=\Aut(X_L)$ and $Y=X_L$. Let $B_1^{*}$ be the subgroup of $B_1$ which fixes $\{1\}\cup S$
and $Y_2(1)$ be the subgraph of $Y$ with vertex set
\begin{equation*}
\{1\}\cup S\cup \{a^2, a^{-2}, a^{i+1}, a^{-i+1}, a^{2i}, a^{-2i}, a^{-i-1}, a^{i-1}\}.
\end{equation*}
Also
\begin{equation*}
N_Y(a)=\{1, a^2, a^{i+1}, a^{-i+1}\}, N_Y(a^{-1})=\{1, a^{-2}, a^{-i-1}, a^{i-1}\}, N_Y(a^i)=\{1, a^{i+1}, a^{2i}, a^{i-1}\}
\end{equation*}
and $N_Y(a^{-i})=\{1, a^{-i+1}, a^{-2i}, a^{-i-1}\}$.
By a tedious computation we find that $B_1^{*}=1$ and $B_1$ have no any element of order $4$. Thus $B_1$ acts faithfully on $S$ and so $B_1 \leq S_4$.
Since $B_1$ has no any element of order 4, by considering the subgroups of $S_4$ we see that either
$B_1\cong \mathbb{Z}_2\times \mathbb{Z}_2$ or $|B_1|\in\{1, 2, 6\}$. Since $|B|=|B_1||G|$ we see that $|B|\in \{p, 2p, 4p, 6p\}$.
Moreover $\alpha: a \rightarrow a^{-1}$ is an element of $\Aut(G, S)$ and $o(\alpha)=2$. Hence
$|B_1|=2$ or $4$ or $6$. From the fact that $A/L$ acts transitively on $V(Y)$ we have $p\mid |A/L|$ and
$|A/L|\in \{p, 2p, 4p, 6p \}$. By considering the order of $B$ we see that $G\leq A/L$. So $G=T/L$
where $T\leq A$ and by Proposition \ref{p2}, $X$ is a Cayley graph, a contradiction.

Assume now that $L$ is trivial. Then $N$ is not solvable, where $N$ is the minimal normal subgroup of $A$.
If $N$ is semiregular then $|N|$ divides $3p$ and $N$ is solvable, a contradiction. Therefore $N$ is
not semiregular and the valency of quotient graph $X_N$ is
not $4$. By considering the order of graph we see that $X_N$ is not cubic.
Thus $\deg(X_N)=0$ or $2$ and since $A/K \leq \Aut(X_N)$ we conclude that $A/K$ is solvable.
Moreover, since $K=K_vN$ and $K_v$ is a $\{2, 3\}$-group we conclude that $K/N$ is solvable. From the fact that
$A/K \cong (A/N)/(K/N)$ we conclude that $A/N$ is solvable. We claim that $N$ is the unique minimal normal
subgroup of $A$. Suppose by contrary that $M\neq N$ is another minimal
normal subgroup of $A$. Since $MN/N \cong M/(M\cap N)\leq A/N$ we see that $M/(M \cap N)$ is
solvable. But $M \cap N=1$ and so $M$ is solvable, a contradiction. Hence $N$ is the only minimal
normal subgroup of $A$. Also since $|A|$ is not divided by $p^2$ we conclude that $N$ is a simple
group. Therefore $A$ is an almost simple group and $\soc(A)=N$. From the fact that $A$ is a $\{2, 3, p\}$-group,
we conclude that $N$ is a $K_3$-group. Also since $p\geq 11$ it follows that $N$ is isomorphic to either $\PSL_2(17)$ or $\PSL_3(3)$.

First assume that $N\cong\PSL_2(17)$. Then by part $(iii)$ of Lemma \ref{a1}, $A\cong\PSL_2(17)$ or $\PGL_2(17)$.
If $A\cong\PSL_2(17)$ then $|A:A_v|=51$. By \cite{CCNPW}, we see that $A$ has not any
subgroup of index $51$, a contradiction. Also if $A\cong\PGL_2(17)$, then by using \texttt{GAP} \cite{Gap},
$A$ has no any subgroup of index $51$, another contradiction.

Assume now that $N\cong\PSL_3(3)$. In this case, by part $(v)$ of Lemma \ref{a1}, $A\cong\PSL_3(3)$ or $A=\Aut(\PSL_3(3))$.
In the first case, by using \texttt{GAP} \cite{Gap}, and the fact that $A$ is a transitive permutation group of degree $39$,
we see that $A$ has a regular subgroup which means that $X$ is a Cayley graph, a contradiction. However the later case is impossible,
because there is no transitive permutation representation of $\Aut(\PSL_3(3))$ of degree $39$.
\end{proof}

Let $X$ be a connected tetravalent vertex-transitive graph of order $6p$. If $p=2$ or 3,
then by \cite{MR}, we see that $X$ is a Cayley graph. Also if $p=5$ or $p=7$ then by \cite{HRT} and using \cite{HR}
we see that $X$ is isomorphic to one of the graphs in the following lemma.

\begin{lemma}\label{l1}
A connected tetravalent vertex-transitive non-Cayley graph of order at most 42 are listed below:
\begin{itemize}
\item[$(i)$] The Coxeter graph;
\item[$(ii)$] the Desargues graph;
\item[$(iii)$] the dodecahedron graph;
\item[$(iv)$] one of the six graphs in Figure 1.
\end{itemize}
\end{lemma}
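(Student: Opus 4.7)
The plan is to reduce the lemma to a finite enumeration by appealing to the census of small tetravalent vertex-transitive graphs in \cite{HRT} together with the Cayley test in \cite{HR}. These references provide a complete list of connected tetravalent vertex-transitive graphs of order not exceeding $42$ and flag each entry as Cayley or non-Cayley. Thus the substance of the proof is to go through orders $n \leq 42$, match the non-Cayley entries against the graphs named in (i)–(iv), and verify that no other $\mathcal{VNC}$-graph appears.

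To make the case analysis manageable, I would first absorb as many orders as possible into existing theoretical classifications, leaving only a short residual list to inspect in the census. Specifically: by \cite{MR} the orders $n \in \{6,9,21\}$ contribute no tetravalent $\mathcal{VNC}$-graph; every order of the form $n = 2p$ is handled by \cite{CO,Ma}; orders $n = 4p$ by \cite{Z}; order $n = 2p^2$ by \cite{CGhQ}; orders $n = 3p$ by Proposition \ref{p3}, which forces $X \cong L(O_3)$ at $n = 15$ and yields nothing new for other primes; and prime orders produce only circulants, hence Cayley graphs. These reductions dispose of the bulk of the range $n \leq 42$.

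The residual orders (essentially the composite orders of the form $2pq$, $2^a 3^b$, and a few scattered values such as $24,\,30,\,36,\,40,\,42$) are then traversed in \cite{HRT}. For each connected tetravalent vertex-transitive graph listed of such an order, apply the Cayley test of \cite{HR}; if the test fails, the graph is a $\mathcal{VNC}$-graph and must coincide with one of the Coxeter graph, the Desargues graph, the dodecahedron, or one of the six graphs in Figure 1. Matching to a specific entry is done by computing standard invariants (order, girth, automorphism group order, and stabilizer structure).

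The main obstacle is not mathematical but bookkeeping: correctness rests entirely on the accuracy of the tabulations in \cite{HRT,HR}, and the volume of orders to inspect is nontrivial. I would safeguard the verification by reconstructing each candidate in \texttt{GAP}, computing $\Aut(X)$, and checking directly whether $\Aut(X)$ contains a regular subgroup of order $|V(X)|$; this gives an independent confirmation of the Cayley/non-Cayley status without relying solely on the cited tables.
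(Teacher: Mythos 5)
Your proposal matches the paper's treatment: the paper offers no independent argument for this lemma and simply derives it from the census data of \cite{HRT} together with the Cayley/non-Cayley determinations communicated in \cite{HR}, which is exactly the enumeration-plus-citation strategy you describe. Your additional reductions via \cite{MR}, \cite{CO}, \cite{Ma}, \cite{Z}, \cite{CGhQ} and Proposition~\ref{p3}, and the independent \texttt{GAP} check for a regular subgroup of $\Aut(X)$, are sound safeguards but do not change the substance of the argument.
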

\begin{figure}[H]
\includegraphics[scale=0.5]{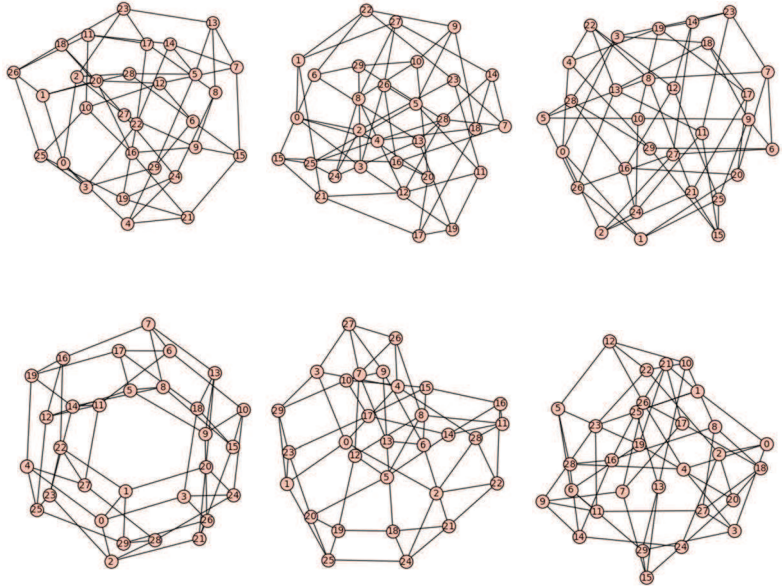}
\caption{Six graphs illustrated in Lemma \ref{l1}}\label{fig:case2}
\end{figure}

\textbf{Proof of the Theorem \ref{a3}.}
Let $X$ be a connected tetravalent vertex-transitive graph of order $6p$. If $p\leq 7$, then 
$X$ is one of the nine specified graphs in Lemma~\ref{l1}. So we may assume that $p\geq 11$.
Suppose that $X\cong X_{3,p,t}$, then by Proposition \ref{m1m2t}, $X$ is a $\mathcal{VNC}$-graph.
Assume that $X$ is a $\mathcal{VNC}$-graph. Let $A=\Aut(X)$ and $v\in V(X)$. Since $X$ is connected and tetravalent, $A_v$ is a $\{2, 3\}$-group
and so $|A|=2^s3^tp$ for some integers $s,t\geq 1$. Assume that $N$ is a minimal normal subgroup of $A$.
We know that $N=T^k$, where $T$ is a finite simple group. Let
$\Omega$ be the set of orbits of $N$ on $V(X)$ and $K$ be the kernel of the action of $A$ on $\Omega$.
Now we consider two cases:

{\bf Case I.} $A$ is solvable.

In this case $N$ ia an elementary abelian $r$-group, where $r\in\{2, 3, p\}$.
First suppose that $N$ is an elementary abelian $2$-group.
In this case $X_N$ has order $3p$ and valency $2$ or $4$. If $X_N$ has valency $2$
then $X_N \cong C_{3p}$ and $\Aut(X_N)\cong D_{6p}$. Suppose that
$\Omega=\{\Delta_0, \Delta_1, \Delta_2, \cdots,\Delta_{3p-1}\}$, where the subscripts
are taken modulo $3p$. Also suppose that $\Delta_i \thicksim \Delta_{i+1}$ in $X_N$. If
for some $i$, the induced graph $X[\Delta_i]$ has an edge then by considering the valency
of $X$, we see that $X[\Delta_{i-1}\cup\Delta_i]\cong C_{4}$ and $X[\Delta_i\cup\Delta_{i+1}]\cong K_{4}$
where $1\leq i\leq 3p-1$. Now by considering the induced graph $X[\Delta_0 \cup \Delta_{3p-1}]$,
we have a contradiction. Thus we may assume that each $X[\Delta_i]$ has no any edge. Thus
for each $i$, $X[\Delta_i\cup\Delta_{i+1}]\cong C_4$. It is easy to see
that in this case $X\cong C_{3p}[2K_1]$, which is a Cayley graph, a contradiction.

Assume now that $X_N$ has valency $4$. Then $K_v=1$ for each vertex $v$. By Proposition \ref{p3}, $X_N$ is a Cayley graph, unless
the case where $X_N$ is isomorphic to the line graph of the Petersen graph. Since $p\geq 11$,
$X_N$ is not isomorphic to the line graph of the Petersen graph. Hence $X_N$ is a Cayley graph
and by \cite[Proposition 1.2]{M.3}, is a $(3, p)$-metacirculant graph.
Thus $X_N$ has an automorphism $\sigma$ of order $p$ such that $\langle \sigma \rangle$
is semiregular on the vertex set of $X_N$. Moreover $X_N$ has an automorphism say $\tau$
such that $\tau$ normalizing $\langle\sigma\rangle$ and cyclically
permutes the $3$ orbits of $\langle \sigma \rangle$ and has a cycle of size $3$ in
its cycle decomposition. Thus, for some positive integer $i$, the group $\langle \sigma, \tau^i \rangle$, has order
$3p$ and acts regularly on $V(X_N)$. Therefore we may suppose that $X_N=\Cay(H, S)$ where $H=\langle \sigma, \tau^i \rangle$. First
suppose that $H\cong F_{3p}$. If $X_N$ is normal then
$\Aut(X_N)_1= \Aut(H, S)$. Since $X_N$ is connected and $\Aut(H, S)$ acts faithfully on $S$
we conclude that $\Aut(X_N)_1\leq S_4$. Moreover $S=\{x, x^{-1}, y, y^{-1}\}$ where $x$ and $y$ have not the same order. It is easy
to see that $\Aut(H, S)$ has no any element of order $3$ or $4$. Thus by considering
the subgroups of $S_4$ we see that $\Aut(X_N)_1=1$ or $\mathbb{Z}_2$ or $\mathbb{Z}_2 \times\mathbb{Z}_2$.
Therefore $|\Aut(X_N)| \leq 12p$ and so $|A/K|$ divides $12p$. By considering the order of
$\Aut(X_N)$ we see that $H\leq A/K$ and so $H=T/K$ for some $T\leq A$. Now $|T|=6p$ and
$T$ acts regularly on $V(X)$, a contradiction. Thus we may suppose that $X_N$ is not normal.
If $X_N$ is not primitive then by \cite[Theorem 3.1]{LX} we see that the valency of $X_N$ is not equal to $4$. Also if
$X_N$ is primitive then again by \cite[Lemma 2.1]{WX} we have a contradiction. Therefore $X_N$ is a Cayley
graph on $\mathbb{Z}_{3p}$.

Thus $X_N$ is a circulant graph and $X_N=\Cay(H, S)$, where $S=\{a^i, a^{-i}, a^j, a^{-j} \}$
and either $(i, 3p)=1$ or $(j, 3p)=1$ or $(i-j, 3p)=1$. Set $B=\Aut(X_N)$ and $Y=X_N$.
Let $B_1^{*}$ be the subgroup of $B_1$ which fixes $\{1\}\cup S$ and $Y_2(1)$ be the subgraph
of $Y$ with vertex set $\{1\} \cup S \cup (S^2-(1 \cup S))$. We see that
\begin{equation*}
S^2-(1\cup S)=\{a^{2i}, a^{i+j}, a^{i-j}, a^{-2i}, a^{-i+j}, a^{-i-j}, a^{2j}, a^{-2j}\}.
\end{equation*}
Also
\begin{equation*}
N_Y(a^i)=\{1, a^{2i}, a^{i+j}, a^{i-j}\}, N_Y(a^{-i})=\{1, a^{-2i}, a^{-i+j}, a^{-i-j}\},
N_Y(a^j)=\{1, a^{i+j}, a^{-i+j}, a^{2j}\}
\end{equation*}
and $N_Y(a^{-j})=\{1, a^{i-j}, a^{-i-j}, a^{-2j}\}$.
By a tedious computation we have $B_1^{*}=1$ and $B_1$ has no any element of order $4$. Thus $B_1$ acts faithfully
on $S$ and so $B_1 \leq S_4$. Since $B_1$ has no any element of order $3$, and the fact
that $\alpha: a\rightarrow a^{-1}$ is an element of $\Aut(H, S)$
and $o(\alpha)=2$, it implies that $B_1\cong\mathbb{Z}_2$ or $\mathbb{Z}_2\times\mathbb{Z}_2$ and so $|\Aut(X_N)|\in \{6p, 12p \}$.
Since $A/N$ acts transitively on $V(Y)$ we have that $3p$ divides $|A/N|$. Thus $|A/N| \in \{3p, 6p, 12p\}$.
By considering the order of $B$, we see that $H\leq A/N$ and so $H=T/N$ where $T\leq A$. Now
by Proposition \ref{p2}, $X$ is a Cayley graph, a contradiction.

Now suppose that $N$ is an elementary abelian $3$-group and
$\Omega=\{\Delta_0, \Delta_1, \Delta_2, \cdots, \Delta_{2p-1}\}$, where the
subscripts are taken modulo $2p$. Then $X_N$ has order $2p$ and so $X_N$ has
valency $2$, $3$ or $4$. First suppose that $X_N$ has valency $2$. Then $X_N\cong C_{2p}$
and ${\Aut(X_N)} \cong D_{4p}$. Suppose that $\Delta_i \thicksim \Delta_{i+1}$ in $X_N$.
If $\Delta_i$ has an edge then by considering the valency of $X$ we see that $N$ acts faithfully on $X[\Delta_i]$.
Hence $|N|=3$ and by considering the valency of $X$ we see that $X[\Delta_{i-1} \cup \Delta_i]\cong 3K_2$,
$|K_v|\leq 2$ and $|K|\leq 6$. Since $A/K$ is transitive on $V(X_N)$ it follows
that $2p$ divides $|A/K|$. Hence $A/K \cong \mathbb{Z}_{2p}$
or $A/K\cong D_{2p}$ or $A/K \cong D_{4p}$. If $A/K \cong \mathbb{Z}_{2p} $ or $A/K \cong D_{4p}$,
then $A/K$ has an element of order $2p$, say $K\alpha$, where $\alpha \in A$. Therefore
$\alpha^{2p}\in K$ and hence $o(\alpha^{2p})=1$, $2$, $3$ or $6$. If $\alpha^{2p}=1$ then
$\langle \alpha \rangle \times \langle \gamma \rangle$, where $\gamma$ is an element of
order $3$ in $K$, acts transitively and so regularly on $V(X)$, a contradiction. Also if
$o(\alpha^{2p})=2$, $3$ or $6$ then $\alpha^{2}$, $\alpha^3$ and $\alpha^{6}$ have order
$2p$. Thus in each case there is an element of order $2p$, say $\beta$, in $A$ such that
$\langle \beta \rangle \times \langle \gamma \rangle$ acts transitively and so regularly
on $V(X)$, a contradiction. Thus we may suppose that $A/K \cong D_{2p}$. Thus $A/K$ has
an element of order $p$, say $K\alpha$. Clearly, $K\alpha$ does not fix any element
of $V(X_N)$ and so $\alpha$ does not.
Clearly, $\alpha^p \in K$ and so $o(\alpha^p)=1$ or $2$ or $3$ or $6$. In each case we can
find an element of order $p$, say $\beta$ such that $\langle \beta \rangle$ acts
semiregularly on $V(X)$. Set $H=\langle \beta \rangle \times \langle \gamma \rangle$,
where $\gamma$ is an element of order $3$ in $K$, acts semiregularly  on $V(X)$ and
has two orbits on $V(X)$, which means that $X$ is a bi-Cayley graph over
$H\cong\Bbb Z_{3p}$. Then, by Proposition \ref{m1m2t}, $X\cong X_{3,p,t}$, where
$1\leq t\leq p-1$, and $t^2\equiv -1$ (mod $p$).

Suppose that $\Delta_i$ has no edge. Then it is easy to see that either
$X[\Delta_0\cup\Delta_{2p-1}]=X[\Delta_{i-1} \cup \Delta_i] \cong C_6$ $(1 \leq i \leq 2p-1)$
or $X[\Delta_{i} \cup \Delta_{i+1}] \cong 3K_2$ $(1 \leq i \leq 2p-1, i \hskip 0.1 cm  {\rm is} \hskip 0.1 cm {\rm odd})$
and $X[\Delta_{i} \cup \Delta_{i+1}] \cong K_{3, 3}$
$(0 \leq i \leq 2p-1, i \hskip 0.1 cm {\rm is} \hskip 0.1 cm {\rm even})$.

First suppose that $X[\Delta_0\cup \Delta_{2p-1}]=X[\Delta_{i-1} \cup \Delta_i] \cong C_6$ $(1 \leq i \leq 2p-1)$.
Then we consider the action of $K$ in $X[\Delta_i]$ $(0\leq i\leq 2p-1)$. Now it is easy to see that
$K$ acts faithfully on $X[\Delta_i]$ and so $K\leq S_3$. Since $2p$ divides $|A/K|$ and $|A/K|$ divides $4p$
with a similar arguments as above we get a contradiction. Now suppose that $X[\Delta_{i} \cup \Delta_{i+1}] \cong 3K_2$
$(1 \leq i \leq 2p-1, i \hskip 0.1 cm  {\rm is} \hskip 0.1 cm {\rm odd})$ and
$X[\Delta_{i} \cup \Delta_{i+1}] \cong K_{3, 3}$
$(0 \leq i \leq 2p-2, i \hskip 0.1 cm {\rm is} \hskip 0.1 cm {\rm even})$.

Let $V(\Delta_i)=\{a_i, b_i, c_i \}$ $(0 \leq i \leq 2p-1)$. Without loss of generality we may assume that
$a_i\thicksim a_{i+1}, b_i\thicksim b_{i+1}$ and $c_i\thicksim c_{i+1}.$ Now it is easy
to see that $\alpha: a_i \mapsto b_i, b_i \mapsto c_i, c_i \mapsto a_i$ and
$\beta:a_i \mapsto a_{i+2}, b_i\mapsto b_{i+2}, c_i\mapsto c_{i+2}$ are
automorphisms of $X$ of orders $3$ and $p$, respectively, where $(0\leq i \leq 2p-1)$ .
Clearly $\langle \alpha \rangle \times \langle \beta \rangle \cong \mathbb{Z}_{3p}$
acts semiregularly and has two orbits on $V(X)$. Therefore $X$ is a bi-Cayley graph
over the group $\mathbb{Z}_{3p}$ and again, by Proposition \ref{m1m2t}, $X=X_{3,p,t}$, where
$1\leq t\leq p-1$ and $t^2\equiv -1$ (mod $p$).

Suppose that $X_N$ has valency either $3$ or $4$. If $\Aut(X_N)$ is primitive
then by \cite[Corollary 6.6]{Ma}, $p\geq 313$, $2p=m^2+1$ for some composite integer
$m$. Also $\Gamma$ has valency either $\frac{m(m-1)}{2}$ or $\frac{m(m+1)}{2}$, a
contradiction. Thus we may suppose that $\Aut(X_N)$ is imprimitive. Now by
\cite[Theorem]{M.3}, $X_N$ is metacirculant. If $X_N$ has valency 3 then by
\cite[Main Theorem]{T1}, $X_N$ is isomorphic to generalized Petersen graph
$GP(p, k)$. By \cite[Theorems 1, 2]{FGW}, we see that $|\Aut(X_N)|=4p$.
If $\Delta_i$ has an edge then $X[\Delta_i]\cong C_3$ $(0\leq i \leq 2p-1)$.
We see that $X_N$ can not have valency $3$, a contradiction. Thus we may suppose
that $\Delta_i$ $(0\leq i \leq 2p-1)$ has no edges. Let $V(\Delta_i)=\{a_i, b_i, c_i \}$
$(0\leq i \leq 2p-1)$. Let $\Delta_{i+1}$, $\Delta_{i+2}$ and $\Delta_{i+3}$ are the
vertices adjacent to $\Delta_i$. Since $X_N$ has valency 3, we may suppose that
$X[\Delta_i \cup \Delta_{i+1}] \cong C_6$, $X[\Delta_{i} \cup \Delta_{i+2}]\cong 3K_2$
and $X[\Delta_{i} \cup \Delta_{i+3}] \cong 3K_2$. Now it is easy to see that
$\alpha: a_i \mapsto b_i, b_i \mapsto c_i, c_i \mapsto a_i$
is an automorphisms of $X$ of order $3$. Also $K$ acts faithfully on $X[\Delta_i]$
and so $K=N \cong \mathbb{Z}_{3}$. By \cite[Theorem 1.1]{AGh}, we see that the generalized
Petersen graph is a normal bi-Cayley graph on $H=\mathbb{Z}_{p}$. Clearly, $H$ acts
semiregularly on $V(X_N)$. If $H$ is not a subgroup of $A/K$ then $p^2 \mid |\Aut(X_N)|$,
a contradiction. Thus $H \leq A/K$ and $A/K$ has a semiregular element of order $p$, say
$K\alpha$. Now $\alpha^p \in K$ and so $o(\alpha^p)=1, 2$ or 3. In each case it is easy to
find an element of order $p$, say $\beta$, which is a power of $\alpha$. If $\beta$ fixes
an element then it is easy to see that $K\alpha$ or $K\alpha^2$ or $K\alpha^3$ fixes at
least one $\Delta_{i}$, a contradiction. Now $\langle \beta \rangle \times \langle \gamma \rangle \cong \mathbb{Z}_{3p}$
acts semiregularly and has two orbits on $V(X)$ where $\gamma\in K$ and $O(\gamma)=3$. Therefore $X$ is a bi-Cayley graph over
group $\mathbb{Z}_{3p}$ and by Proposition \ref{m1m2t}, $X=X_{3,p,t}$, where
$1\leq t\leq p-1$ and $t^2\equiv -1\pmod p$.
Also if $X_N$ has valency 4 then $N=K$. Since $A_v$ is $\{2, 3\}$-group we conclude that
$A$ has a semiregular element of order $p$, say $\delta$. Since $N_A(N)/C_A(N)$ is a subgroup
of $\Aut(N)\cong\mathbb{Z}_{2}$ we conclude that $\delta$ or ${\delta}^2 \in C_A(N)$. Also if
$N=\langle\gamma\rangle$ then $\langle\delta,\gamma\rangle$  or $\langle\delta^2,\gamma\rangle$ is a semiregular subgroup
of $\Aut(X)$ which is isomorphic to $\mathbb{Z}_{3p}$. Thus $X$ is a bi-Cayley graph over $H\cong\mathbb{Z}_{3p}=\langle a \rangle$
and by Proposition \ref{m1m2t}, $X=X_{3,p,t}$, where $1\leq t\leq p-1$ and $t^2\equiv -1 \pmod p$.

Finally, suppose that $N$ is an elementary abelian $p$-group. By considering the order
of $A$ we see that $|N|=p$. Suppose that  $N_v\neq 1$ for some $v \in V(X)$ and $w$ is an
another arbitrary vertex of $X$. Since $X$ is transitive, there exists $\alpha \in \Aut(X)$
such that $w^\alpha=v$ and so  $N_v=N_{w^\alpha}=\alpha^{-1}N_w\alpha=N_w$. Thus $N$ is
trivial, a contradiction. Therefore $N$ acts semiregularly on $V(X)$ and so $X$ is an
$N$-regular cover of $X_N$. By \cite[Page 173]{MR} we know that $X_N$ is a Cayley graph. Thus
$\Aut(X_N)$ has a subgroup which acts regularly on $V(X)$, say $G$. Suppose
that $\tilde{G}$ is a lift of $G$. Now it is easy to see that $\tilde{G}$ acts
regularly on $V(X)$ and so $X$ is a Cayley graph, a contradiction.

{\bf Case II.} $A$ is not solvable.

Let $L=\Sol(A)$ be the solvable radical of $A$ and $\Omega$ be the set of orbits
of $L$ on $V(X)$. First suppose that $L$ is not trivial. Let $K$ be the kernel of $A$
on $\Omega$. Since $K_v$ is a $\{2, 3\}$-group it implies that $K_v$ is solvable and so
by considering $K=LK_v$ we see that $K$ is solvable. Now by the definition of $L$ we
conclude that $K=L$ and $A/L \leq \Aut(X_L)$. If $\deg(X_L)=0$ then $A/L$ is solvable
and so $A$ is solvable, a contradiction. Also if $\deg(X_L)=2$ then $\Aut(X_L)\cong D_{2n}$,
where $|V(X_L)|=n$ and $n\in\{3, 6, p, 2p, 3p \}$. Now $A/L$ is solvable and so $A$ is solvable,
a contradiction. Thus we may suppose that $\deg(X_L)=3$ or $4$. If $\deg(X_L)=3$ then
$n\in \{6,  2p, 3p \}$. By \cite[Theorem 1.1]{LLW}, we see that $\Aut(X_L)$
is solvable and so $A/L$ is solvable. Now $A$ is solvable, a contradiction. Also if $\deg(X_L)=4$
then $L=K$ and $K$ acts faithfully on each orbits of $K$. Moreover, in this case
$n\in \{6, p, 2p, 3p \}$. If $n=6$, then $X_L$ is isomorphic to the octahedral graph and so $|\Aut(X_L)|=48$.
Thus $\Aut(X_L)$ is solvable which implies that $A$ is solvable, a contradiction. If $n=p$ then by $|V(X)|=|L|n$ we see that $|L|=6$ and $L \cong S_3$ or $\mathbb{Z}_6$. Also since $A_v$ is a
$\{2, 3\}$-group, it implies that $A$ has a semiregular element of order $p$, say $\alpha$.
If $L$ as subgroup of $S_6$ is isomorphic to $S_3$ then since $L$ is semiregular we conclude that
$S_3 \times \langle \alpha \rangle$ acts regularly on $V(X)$, a contradiction. Similarly if $L$
as subgroup of $S_6$ is isomorphic to $\mathbb{Z}_6=\langle \beta \rangle$ then
$\langle \alpha \rangle \times \langle \beta \rangle$ acts regularly on $V(X)$,
a contradiction. Also if $|V(X_L)|=2p$ then $|L|=3$.
Since $A_v$ is $\{2, 3\}$-group we conclude that $A$ has a semiregular element, say $\alpha$, of
order $p$, . Since $N_A(L)/C_A(L)$ is a subgroup of $\Aut(L)$ we
conclude that $\alpha \in C_A(L)$. Also if $\beta\in L$  and $\beta$ has order $3$
then $\langle \alpha, \beta \rangle$ is a semiregular subgroup of $\Aut(X)$ which
is isomorphic to $\mathbb{Z}_{3p}$. Thus $X$ is a bi-Cayley graph over $H$, where
$H\cong\mathbb{Z}_{3p}$. By Proposition \ref{m1m2t}, $X=X_{3,p,t}$, where
$1\leq t\leq p-1$ and $t^2\equiv -1 \pmod p$. Finally if $|V(X_L)|=3p$ then $|L|=2$.
By Proposition \ref{p3}, $X_N$ is a Cayley graph, unless
the case where $X_N$ is isomorphic to the line graph of the Petersen graph. Since $p\geq 11$, it follows that
$X_N$ is not isomorphic to the line graph of the Petersen graph. Also if $X_N$ is a Cayley
graph then $\Aut(X_N)$ has a subgroup say $H$ which acts regularly on $V(X_N)$. Thus $|H|=3p$
and by \cite[Proposition 1.2]{M.3}, $X_N$ is a $(3, p)$-metacirculant graph.
Thus $X_N$ has an automorphism $\sigma$ of order $p$ such that $\langle \sigma \rangle$
is semiregular on the vertex set of $X_N$, and an automorphism $\tau$ normalizing $\langle\sigma\rangle$ and cyclically
permuting the $3$ orbits of $\langle \sigma \rangle$ such that $\tau$ has a cycle of size $3$ in
its cycle decomposition. Thus $\langle \sigma, \tau^i \rangle$ for some positive integer $i$, has order
$3p$ and acts regularly on $V(X_N)$. Therefore we may suppose that $X_N=\Cay(H, S)$. First
suppose that $H=\langle \sigma, \tau^i \rangle \cong F_{3p}$. If $X_N$ is normal then
$\Aut(X_N)_1= \Aut(H, S)$. Since $\Aut(H, S)$ acts faithfully on $S$
we conclude that $\Aut(X_N)_1\leq S_4$. Also since $X_N$ is connected we conclude
that $S=\{x, x^{-1}, y, y^{-1}\}$ and $x$ and $y$ have not the same orders. Now it is easy
to see that $\Aut(H, S)$ has no any element of order $3$ or $4$. Thus by considering
the subgroups of $S_4$ we see that $\Aut(X_N)_1=1$ or $\mathbb{Z}_2$ or $\mathbb{Z}_2 \times\mathbb{Z}_2$.
Therefore $|\Aut(X_N)| \leq 12p$ and so $|A/K|$ divides $12p$. Now by considering the order of
$\Aut(X_N)$ we see that $H\leq A/K$ and so $H=T/K$ for some $T\leq A$. Now $|T|=6p$ and
$T$ acts regularly on $V(X)$, a contradiction. Thus we may suppose that $X_N$ is not normal.
If $X_N$ is not primitive then by \cite[Theorem 3.1]{LX} we see that the valency of $X_N$ is not equal to $4$. Also if
$X_N$ is primitive then again by \cite[Lemma 2.1]{WX} we have a contradiction. Therefore $X_N$ is a Cayley
graph on $\mathbb{Z}_{3p}$. Now similar to the case where $A$ is solvable, we conclude that $X$
is a Cayley graph, a contradiction.

Suppose that $L$ is trivial. By considering the order of $A$, we see that
$N$ is a $K_3$-group. Since $p\geq 11$, it follows that $N\cong \PSL_2(17)$ or $\PSL_3(3)$.
If $N$ has more than two orbits then $n\in\{3, 6, 2p, 3p\}$. First assume that $n=2p$
or $3p$. We know that $A/K$ acts transitively on $V(X_N)$ and $K=K_{\alpha}N$, where $v\in V(X)$.
Thus $p^2$ divides $|A|$, a contradiction. Thus we may suppose that $n=3$ or $6$.
By using \texttt{GAP} \cite{Gap}, it is easy to see that $\PSL_2(17)$ and $\PSL_3(3)$ does not have
any orbit of order $p$ or $2p$, where $p\in \{13, 17\}$. Therefore $N$ has at most two orbits.
Now $\Aut(X_N)$ is solvable and so $A/K$ is solvable. Also since $K_v$ is $\{2, 3\}$-group
and $K=K_vN$, it follows that $K/N$ is solvable. Since $A/K \cong (A/N)/(K/N)$ we conclude that
$A/N$ is also solvable. Suppose that $M$ is another minimal normal subgroup of $A$. Thus by the fact that
$MN/N \cong M/(M \cap N)\leq A/N$ we see that $M/(M\cap N)$ is solvable. But $M\cap N=1$
and so $M$ is solvable, a contradiction. Hence $N$ is the only minimal normal subgroup of $A$.
Since $|A|$ is not divisible by $p^2$ we conclude that $N$ is a simple group. Therefore $A$
is an almost simple group and $\soc(A)=N$. Since $A$ is $\{2, 3, p\}$-group, it follows that
$N$ is a $K_3$-group. Also since $p\geq 11$ we conclude that $N\cong \PSL_2(17)$ or $N \cong \PSL_3(3)$.

First assume that $N \cong \PSL_2(17)$. Then $A\cong \PSL_2(17)$ or $\PGL_2(17)$.
If $A\cong \PSL_2(17)$ then $|A:A_v|=102$. By \cite{CCNPW}, we see that $A$ has two
subgroups of index $102$ and both of them have an element of order $4$. Also we see that $A_v$
acts transitively on the set of vertices adjacent to $v$. By Proposition \ref{p2.2}, $X$
is a symmetric graph. If $X$ is one-regular then $|A|=2^3.3.17$, a contradiction.
By \cite[Theorem 1]{BGh}, we see that $X\cong C_{3p}[2K_1]$ and so $X$ is a Cayley graph, a contradiction.
Also if $A\cong \PGL_2(17)$ then by using \texttt{GAP} \cite{Gap}, $A$ has no any subgroup of index $102$, a contradiction.

Assume that $N \cong \PSL_3(3)$. Then $A\cong \PSL_3(3)$ or $A=\Aut(\PSL_3(3))$.
Let $A=\Aut(\PSL_3(3))$. Since $A\leq S_{78}$ is a transitive permutation group of degree
$78$, there exists a core-free subgroup $H$ of $A$ of index $78$ such that $A$ is isomorphic to the permutation representation of the
action of $A$ on the right cosets of $H$ in $G$ by right multiplication. One can check, by using \texttt{GAP} \cite{Gap}, that in this case $A$
has a regular subgroup of order $78$ which means that $X$ is a Cayley graph, a contradiction. Hence
$A=\PSL_3(3)$ and $X=\cos(\PSL_3(3),H,D)$, is a coset graph, where $H\cong (\mathbb{Z}_3\times\mathbb{Z}_3)\rtimes Q_8$ and
$D$ is a union of double cosets of $H$ in $G$. Since $X$ is $4$-valent, we have $|D|/|H|=4$.
By using \texttt{GAP} \cite{Gap}, there are $15$ possibilities for $D$ and in each case $\langle D\rangle$
is a proper subgroup of $\PSL_3(3)$ which means that $X$ is disconnected, a contradiction. This complete the proof.

\bigskip
\bigskip
{\footnotesize \pn{\bf M.~Arezoomand}\;
\\ {University of Larestan, 74317-16137,} {Larestan, I. R. Iran}\\
{\tt Email: arezoomand@lar.ac.ir}\\

{\footnotesize \pn{\bf M.~Ghasemi}\; \\ {Department of
Mathematics},\\ {Urmia University, 57135,} {Urmia, I. R. Iran}\\
{\tt Email: m.ghasemi@urmia.ac.ir}\\

{\footnotesize \pn{\bf Mohammad~A.~Iranmanesh}\; \\ {Department of
Mathematics},\\ {Yazd University,  89195-741,} { Yazd, I. R. Iran}\\
{\tt Email: iranmanesh@yazd.ac.ir}
\end{document}